\newtheorem {theorem} {Theorem} 
\newtheorem{prop}[theorem] {Proposition}
\newtheorem{lemma}[theorem] {Lemma}
\newtheorem{coro}[theorem] {Corollary}
\theoremstyle{definition}
\newtheorem{defin}[theorem] {Definition}
\newtheorem{remark}[theorem] {Remark}
\newcommand{\ts}{\hspace{0.5pt}}
\newcommand{\CC}{\mathbb{C}\ts}
\newcommand{\RR}{\mathbb{R}\ts}
\newcommand{\QQ}{\mathbb{Q}\ts}
\newcommand{\ZZ}{\mathbb{Z}}
\newcommand{\TT}{\mathbb{T}}
\newcommand{\NN}{\mathbb{N}}
\newcommand{\abs}[1]{\left|#1\right|}
\newcommand{\one}{\mathbbm{1}}
\newcommand{\id}{\mathbbm{1}}
\newcommand{\exSet}{E}
\DeclareMathOperator{\mgcd}{mgcd}
\DeclareMathOperator{\ord}{ord}
\DeclareMathOperator{\tr}{tr}
\DeclareMathOperator{\GL}{GL}
\DeclareMathOperator{\Per}{Per}
\DeclareMathOperator{\PerP}{PerP}
\DeclareMathOperator{\MPer}{MPer}
\DeclareMathOperator{\Mat}{Mat}
\DeclareMathOperator{\End}{End}
\begin{document}
\title[Period sets of linear toral endomorphisms on $\TT^2$]
{Period sets of linear toral endomorphisms on $\TT^2$}

\author[J. Llibre]{Jaume Llibre $^{1}$}
\address{$^{1,2}$Departament de Matem\`{a}tiques,
Universitat Aut\`{o}noma de Barcelona, 08193 Bellaterra, Barcelona,
Catalonia, Spain} \email{jllibre@mat.uab.cat}

\author[N. Neum\"arker]{Natascha Neum\"arker $^{1,2}$}
\address{$^{2}$Matem\'atick\'y \'ustav Slezsk\'e Univerzity v Opav\v{e},
74601 Opava, \v{C}esk\'a republika}
\email{natascha.neumarker@math.slu.cz}

\subjclass[2010]{Primary 54H20.}

\keywords{periodic points, period, toral endomorphisms,
$2$--dimensional torus maps}

\begin{abstract}
The period set of a dynamical system is defined as the subset of all
integers $n$ such that the system has a periodic orbit of length
$n$. Based on known results on the intersection of period sets of
torus maps within a homotopy class, we give a complete
classification of the period sets of (not necessarily invertible)
toral endomorphisms on the $2$--dimensional torus $\TT^2$.
\end{abstract}

\maketitle

\section{Introduction and statement of the main results}

The sets of periods that are present in a dynamical system is one of
the key quantities which characterises the system. For continuous
maps on the interval, the celebrated theorem by Sharkovsky provides
a complete characterisation of period sets in terms of the
Sharkovsky ordering. A generalisation of Sharkovsky's Theorem to the
circle was obtained by the authors  Block, Coppel, Guckenheimer,
Misiurewicz and Young \cite{B, BC, BGMY, Mi}, for a unified
proof see \cite{ALM}. For other classes of maps where the period sets
have been studied see for instance \cite{ALM2, H, K, Ll}.

\smallskip

Toral or torus endomorphisms are continuous mappings of the torus that preserve
its group structure, 
hence in the additive notation $\TT^m \cong \RR^m/\ZZ^m$, they can be 
represented as $m\times m$ integer matrices, see for instance 
\cite[Chapter~0]{W}.
In the present article, the term `toral endomorphism' is always used in this
sense, that is, for a map on the torus which is induced by the action
of an integer matrix modulo 1.
They serve as a standard example in the theory of discrete dynamical 
systems and ergodic theory, and particularly the case of hyperbolic 
toral automorphisms,
corresponding to integer matrices with determinant $\pm 1$ and no eigenvalues
on the unit circle, has been studied extensively because of its
interesting dynamical properties, compare \cite{W,KH}.
In \cite{TP}, the period sets of toral automorphisms on $\TT^2$ 
were investigated.

\smallskip

Minimal period sets are the intersection of all period sets arising
from maps in the same homotopy class; the origins of their study go
back to Alsed\'a, Baldwin, Llibre, Swanson and Szlenk \cite{ABLSS},
see also \cite{JM, JL}. Associated with each homotopy class, one has
a unique integer matrix $A$, which defines the action on the first
homology group, and this integer matrix, in turn, defines an
endomorphism of the torus, $f^{}_A$, hence it is itself a member of the
homotopy class. An important result in arbitrary dimension is that
the minimal period set of a homology class ``essentially'' coincides
with the period set of the associated toral endomorphism, apart from
possibly those periods that may arise from roots of unity among the
eigenvalues of this matrix, see \cite{JL}. However, in many of the
cases in which $\pm 1$ is among the eigenvalues, one finds that the
minimal period set and the period set of the endomorphism totally
differ and it can be concluded that, in these cases, the
endomorphism is not a good model for the dynamics of its homotopy
class with respect to periodic orbits.

\smallskip

For circle maps, the situation is comparatively simple and is
displayed in Table~1. The period sets only depend on the
degree $d$ of the map, listed in the first column; the second and
the third column refer to the minimal period set and the period set
of the corresponding endomorphism, respectively; the last column
answers the question whether the endomorphism defining the class
under consideration is invertible, hence an automorphism.
Here, the linear endomorphism $f_A$ is given by $f^{}_{(d)}(x) = d\cdot x$.

\bigskip
\bigskip

\begin{center}
\begin{tabular}{|c|c|c|c|}
 d    &    $\MPer(f_A)$     & $\Per(f_A)$    &  $\in\mathrm{Aut(\TT)}$?
 \\\hline
 1    & $\emptyset$       & $\{1\}$    &   yes \\ \hline
 0    & $\{1\}$       & $\{1\}$        &  no \\ \hline
 -1   & $\{1\}$       & $\{1,2\}$      &  yes \\ \hline
 -2   & $\NN\setminus\{2\}$  & $\NN\setminus\{2\}$    &  no \\ \hline
 $d\in\ZZ\setminus \{-2,-1,0,1\}$   & $\NN$      & $\NN$      & no \\\hline
\end{tabular}
\end{center}
\begin{center}
{\bf Table 1. Period sets for circle maps.}
\end{center}

\bigskip

In this article, we compare the minimal period sets of maps on
$\TT^2$ with the period set of the associated toral endomorphism,
aiming for an analogue of the above table for dimension $2$.

\smallskip

To obtain a complete classification, a variety of partly very
different techniques is employed. The minimal period sets were
derived in \cite{ABLSS} by means of estimating Nielsen numbers, see
\cite{H} for background reading on this approach. The study of
period sets in the special case of toral automorphisms in \cite{TP}
is based on an extensive distinction of cases; we note that much of
the reasoning in \cite{TP} makes use of the assumption of a
determinant $\pm 1$ and hence does not directly generalise to
arbitrary $2\times 2$ matrices. 
We complete the classification of period sets by making use of 
results on \textit{local conjugacy}, that is conjugacy modulo $n$,
$n\in\NN$, which corresponds to the action of the matrix on the
invariant subsets of points with rational coordinates.

\smallskip

The outline of this article is as follows. In Section
\ref{S:knownStuff} we compile the theory of minimal period sets and
periods of toral endomorphisms as far as needed for our purpose,
derive the period sets $\Per(f_A)$ from the minimal period sets
$\MPer(f_A)$ wherever
possible and identify the cases that require individual treatment.
In Section \ref{S:completion}, we complete the classification of
period sets on the two-dimensional torus by considering normal forms
for matrices with an eigenvalue $\pm 1$.
At the end of this introductory section, we summarise the results of our
later analysis in form of a comprehensive table.

\smallskip

The following Table 2 constitutes the $2$--dimensional analogue of Table 1 for
circle maps. Starting from some $A\in\Mat(2,\ZZ)$, the columns (from
left to right order) refer to the eigenvalues of $A$, the pair of
the trace and the determinant of $A$, i.e. $(t,d)= (\tr(A),\det(A))$,
the minimal polynomial $\mu_A^{}$, the minimal period
set $\MPer(f_A)$ of the homotopy class of $f_A$, the period set
$\Per(f_A)$, and finally an answer to the question whether 
$f_A\in\End(\TT^2)$ is invertible, hence an element of
$\textrm{Aut}(\TT^2)\subset\End(\TT^2)$.
The symbol $\chi^{}_A$ denotes the characteristic polynomial of
the matrix $A$; the symbol $E$ refers to the set of exceptional integer
values $E= \{-2,-1,0,1\}$.

\bigskip
\bigskip

\newcounter{rownmbr}\addtocounter{rownmbr}{1}
\begin{center}
\begin{small}
\begin{tabular}{|r|l|c|c|c|c|c|}
\label{sum2D}
& eigenvalues    & $(t,d)$ & $\mu^{}_A$ & $\mathrm{MPer}(f_A)$ & $\mathrm{Per}(f_A)$  & 
$\in\mathrm{Aut}(\TT^2)$?\\ \hline\hline
\arabic{rownmbr}. & $1$  & $(2,1)$ & $x-1$   & $\emptyset$  & $\{1\}$ &  y\\ \hline
\addtocounter{rownmbr}{1}
\arabic{rownmbr}.  & $1$ & $(2,1)$ &  $\chi^{}_A$     & $\emptyset$  & $\NN$ & y\\\hline
\addtocounter{rownmbr}{1}
\arabic{rownmbr}.  & $-1$ & $(-2,1)$ & $x+1$   & $\{1\}$     & $\{1,2\}$ & y\\\hline
\addtocounter{rownmbr}{1}
\arabic{rownmbr}.  & $-1$ & $(-2,1)$ & $\chi^{}_A$        & $\{1\}$     & $2\NN\cup\{1\}$ & y\\\hline
\addtocounter{rownmbr}{1}
\arabic{rownmbr}. & $\pm 1$  & $(0,-1)$  & $\chi^{}_A$    & $\emptyset$     & $\{1,2\}$ &  y\\\hline
\addtocounter{rownmbr}{1}
\arabic{rownmbr}.  & $e^{2\pi i/3},e^{-2\pi i/3}$ & $(-1,1)$  & $\chi^{}_A$  & $\{1\}$ & $\{1,3\}$ & y\\\hline
\addtocounter{rownmbr}{1}
\arabic{rownmbr}. & $\pm i$& $(0,1)$ & $\chi^{}_A$    & $\{1,2\}$     & $\{1,2,4\}$ & y\\ \hline
\addtocounter{rownmbr}{1}
\arabic{rownmbr}. & $e^{\pi i/3},e^{-\pi i/3}$ & $(1,1)$  & $\chi^{}_A$ & $\{1,2,3\}$     & $\{1,2,3,6\}$ & y\\ \hline
\addtocounter{rownmbr}{1}
\arabic{rownmbr}.  & $0$ & $(0,0)$  & $x^2$ or $x$ & \multicolumn{2}{|c|}{$\{1\}$} & n\\\hline
\addtocounter{rownmbr}{1}
\arabic{rownmbr}.  & $0,1$ & $(1,0)$  & $x^2-x$ & $\emptyset$ & {$\{1\}$} & n\\\hline
\addtocounter{rownmbr}{1}
\arabic{rownmbr}.  & $0,-1$ &  $(-1,0)$ & $\chi^{}_A$ & $\{1\}$     & $\{1,2\}$ & n\\\hline
\addtocounter{rownmbr}{1}
\arabic{rownmbr}. & $\notin\RR$ & $(-2,2)$ & $\chi^{}_A$  & \multicolumn{2}{|c|}{$\NN\setminus\{2,3\}$} & n\\\hline 
\addtocounter{rownmbr}{1}
\arabic{rownmbr}. & $\notin\RR$ & $(-1,2)$  &  $\chi^{}_A$& \multicolumn{2}{|c|}{$\NN\setminus\{3\}$} & n\\\hline %
\addtocounter{rownmbr}{1}
\arabic{rownmbr}.  & $\notin\RR$ & $(0,2)$  & $\chi^{}_A$ & \multicolumn{2}{|c|}{$\NN\setminus\{4\}$} & n\\\hline %
\addtocounter{rownmbr}{1}
%
\multirow{2}{*}{\arabic{rownmbr}.} & \multirow{2}{*}{$\notin\RR$} & none of &  \multirow{2}{*}{$\chi^{}_A$} &  \multicolumn{2}{|c|}{\multirow{2}{*}{$\NN$}} & \multirow{2}{*}{y/n} \\
 & & the above    & &\multicolumn{2}{|c|}{}  & \\\hline
\addtocounter{rownmbr}{1}
\multirow{2}{*}{\arabic{rownmbr}.} & real, & \multirow{2}{*}{$t+d\notin \{0,-2\}$} & $\chi^{}_A$ or $x-a$, 
& \multicolumn{2}{|c|}{\multirow{2}{*}{$\NN$}} & \multirow{2}{*}{y/n}\\
 & both $\not=\pm 1$ & & $a\in\ZZ\setminus E$& \multicolumn{2}{|c|}{} & \\\hline
\addtocounter{rownmbr}{1}
\multirow{2}{*}{\arabic{rownmbr}.}  & \multirow{2}{*}{real} & $t+d\in\{0,-2\}$, & \multirow{2}{*}{$\chi_A^{}$ or $x+2$} & \multicolumn{2}{|c|}{\multirow{2}{*}{$\NN\setminus \{2\}$}} & \multirow{2}{*}{y/n} \\
    &     & $(t,d)\not= (0,0)$& &  \multicolumn{2}{|c|}{} &  \\\hline
\addtocounter{rownmbr}{1}
\multirow{2}{*}{\arabic{rownmbr}.} & \multirow{2}{*}{$-1,-d$} & $t+d=-1$, &  \multirow{2}{*}{$\chi^{}_A$}& \multirow{2}{*}{$2\NN-1$} & \multirow{2}{*}{$\NN$} & \multirow{2}{*}{n} \\
& & $d\in\ZZ\setminus\{-1,0,1\}$ & &   & &  \\ \hline
\addtocounter{rownmbr}{1}
\arabic{rownmbr}. & $1,-2$ & $(-1,-2)$ &  $\chi^{}_A$ &$\emptyset$  & $\NN\setminus\{2\}$ & n \\\hline
\addtocounter{rownmbr}{1}
\multirow{2}{*}{\arabic{rownmbr}.}  & \multirow{2}{*}{$1,d$} & $t-d=1$, & \multirow{2}{*}{$\chi^{}_A$} & \multirow{2}{*}{$\emptyset$}& \multirow{2}{*}{$\NN$} & \multirow{2}{*}{n}\\
 & &  $(t,d)\not=(-1,-2)$ & 
&   &  &  \\\hline 
\end{tabular}
\\
\end{small}
\end{center}

\bigskip

\begin{center}
{\bf Table 2. Period sets for torus maps.}
\end{center}

\section{Definitions and overview of results}\label{S:knownStuff}

The set of periods of a map $f:\TT^m\rightarrow\TT^m$ will be
denoted by
\[
\Per(f) \coloneqq \{n\in\NN \mid f\ \text{has an orbit of length}\
n\}.
\]
The \emph{minimal} set of periods is defined as
the intersection of all period sets in a given homotopy class,
\begin{equation}\label{E:IntersectionMPer}
\MPer(f_A) \coloneqq \bigcap_{g\simeq f} \Per(g),
\end{equation}
where $g\simeq f$ means that $g$ is homotopic to $f$.

Associated with a torus map $f:\TT^m \rightarrow \TT^m$, one has the
first induced homology map $f_{*1}:H_1(\TT^m,\ZZ) \rightarrow
H_1(\TT^m,\ZZ)$, which corresponds to some $m\times m$ integer
matrix $A$. In the following, we will denote by $\Mat(m,R)$ the ring
of $m\times m$ matrices with entries from the ring $R$. By
$\Mat(m,R)^{\times}$ we will refer to the invertible matrices over $R$
or, equivalently, to those elements of $\Mat(m,R)$ whose determinant 
is a unit in $R$. 
The linear map defined by the matrix $A : \RR^m\rightarrow
\RR^m$, covers a unique torus endomorphism
$f_A:\TT^m\rightarrow\TT^m$ whose action is given by matrix
multiplication modulo $1$, that is, two points $x,y\in\RR^m$ cover
the same element of $\TT^m$ if and only if $x-y\in\ZZ^m$. By
$\End(\TT^m)$ we denote the set of toral endomorphisms, and by
$f_A\in\End(\TT^m)$ we refer to the map induced by
$A\in\Mat(m,\ZZ)$. As $f_A \simeq f$, the intersection on the
right-hand side of Equation \eqref{E:IntersectionMPer} comprises the
period set of $f_A$, so one always has $\MPer(f_A)\subset
\Per(f_A)$. In fact, these two sets are typically more closely
related; in \cite[Proposition~3.4]{ABLSS} the following general
result is proved
\begin{equation}\label{E:MPerPerOfEndo}
   \MPer(f) = \Per(f_A)\setminus\{k\in\NN : 1\
   \text{is an eigenvalue of $A^n$}\}.
\end{equation}

In other words, the period set of the endomorphism $f_A$ and the
minimal period set $\MPer(f)$ coincide if and only if $A$ does not
have any eigenvalues that are roots of unity. In this case, the
number of fixed points of $A^n$ can be calculated in terms of the
\emph{Nielsen numbers}: $f_A^n$ has $N(f_A^n)$ isolated fixed
points, where
\[
   N(f_A^n) = \abs{\det(\id - A^n)},
\]
and $\id$ denotes the $m\times m$ identity matrix.
For $m=2$, this simplifies to 
\[
   N(f_A^n) = \abs{1+ \lambda_1^n\lambda_2^n -(\lambda_1^n+\lambda_2^n)},
\]
where $\lambda_1,\lambda_2$ are the eigenvalues of $A$.

\smallskip

When the matrix $A$ has an eigenvalue that is an $n$-th root of
unity, the Nielsen numbers vanish for all multiples of $n$. In this
case, $n$ is not in the minimal period set; however, the endomorphism
$f_A$ admits periodic points of period $n$, which form
subtori of $\TT^m$. A treatment of this case can be found in the
appendix of \cite{BHP}.

\smallskip

The rational points of $\TT^m$ form an invariant subset of $\TT^m$
and can be written as the countable union of the finite sets of
$n$-division points, $n\in\NN$,
\[
[0,1)^m \cap\QQ^m = \bigcup_{n\in\NN} \left\{\left(\frac{k_1}{n},
\ldots,\frac{k_m}{n}\right) \mid 0\leq k_1,\ldots, k_m < n\right\} =
\bigcup_{n\in\NN} L_n.
\]
Each rational lattice $L_n$ in the union on the right-hand side is
invariant under the action of the integer matrix $A$, hence $L_n$ is
partitioned into finite orbits by $f_A$. Denoting by $\PerP(f)$ the
periodic \emph{points} of a map $f$, we can state that, if
$\gcd(n,\det(A))=1$, $L_n \subset \PerP(f_A)$, hence
$$
\bigcup_{n\in\NN \, : \, \gcd(n,\det(A))=1} L_n \subset \PerP(f_A),
\quad \mbox{and} \quad  \bigcup_{n\in\NN} L_n \subset \PerP{(f_A)}
$$
if $\det(A)=\pm1$, that is, if $A$ is an automorphism. In the
absence of roots of unity among the eigenvalues, all periodic points
are isolated and live on the rational lattices, whence the converse
$\PerP(f_A) \subset \bigcup_{n\in\NN} L_n$ follows, and one has strict
equality if
$A$ is invertible, as all lattice points are then consumed in periodic
orbits. 
There is an abundance of literature dealing with
the periods of toral endomorphisms (mostly restricted to the
invertible case, i.e. automorphisms) on the rational lattices $L_n$
depending on $n$; for an overview, see, for instance, the articles
\cite{PV, BF, BRW, BNR} and references therein.
The rational lattice $L_n$ can be identified with the free module
over the finite ring $\ZZ/n\ZZ$, so studying the action of an
integer matrix on $L_n$ amounts to studying the matrix action modulo
$n$, and the question whether two integer matrices share the same
orbit structure on $L_n$ is closely related to the question of
conjugacy over the residue class ring $\ZZ/n\ZZ$. Two matrices that
are conjugate over $\ZZ/n\ZZ$ necessarily exhibit the same orbit
structure on $L_n$.

\smallskip

On the two-dimensional torus, one can formulate a necessary and
sufficient criterion for matrices to be locally conjugate on a
lattice $L_n$ for some or all $n\in\NN$ in terms of trace,
determinant and a third invariant, which we now define.

\begin{defin}
The matrix gcd (short mgcd) of a matrix $M= \displaystyle
\left(\begin{matrix} \alpha & \beta\\\gamma & \delta\end{matrix}\right)$
$\in\Mat(2,\ZZ)$ is defined as $\mgcd(M) \coloneqq \gcd(\beta,\gamma,\delta-\alpha)$.
\end{defin}

We note that the square of $\mgcd(M)$ always divides the discriminant
$\Delta (M) = \tr(M)^2-4\det(M)$,
\begin{equation}\label{E:mgcdDivDiscr}
\Delta(M) = (\alpha+\delta)^2 - 4(\alpha \delta -\beta \gamma) 
= (\alpha + \delta)^2 - 4 \beta \gamma.
\end{equation}
The following theorem asserts trace, determinant and mgcd to be a
complete set of invariants with respect to local conjugacy.

\begin{theorem}[{\cite[Theorem~2]{BRW}}]\label{T:mgcd}
For two integer matrices $M,M^{\prime}\in\Mat(2,\ZZ)$, the following
statements are equivalent.
\begin{itemize}
\item[(a)] The reductions mod $n$ of $M$ and $M^{\prime}$ are
$\Mat(2,\ZZ/n\ZZ)^{\times}$ conjugate.

\item[(b)] $M$ and $M^{\prime}$ share the same trace, determinant and
mgcd.
\end{itemize}
\end{theorem}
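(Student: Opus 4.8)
The plan is to prove the two implications separately, with the bulk of the work in $(b)\Rightarrow(a)$; throughout I read statement (a) as asserting conjugacy modulo \emph{every} $n\in\NN$.

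For $(a)\Rightarrow(b)$ I would observe that all three quantities are invariants of $\Mat(2,\ZZ/n\ZZ)^{\times}$-conjugacy. Trace and determinant are preserved by conjugation over any commutative ring, so $\tr(M)\equiv\tr(M')$ and $\det(M)\equiv\det(M')$ modulo every $n$, forcing equality in $\ZZ$. For the mgcd I would use the reformulation that $g\mid\mgcd(M)$ if and only if the reduction of $M$ modulo $g$ is a scalar matrix $\lambda\id$; being scalar is manifestly conjugation-invariant, so, applying the hypothesis with $n=g$, the set of $g$ dividing $\mgcd(M)$ coincides with the set dividing $\mgcd(M')$, whence $\mgcd(M)=\mgcd(M')$.

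For $(b)\Rightarrow(a)$ write $t=\tr(M)$, $d=\det(M)$, $g=\mgcd(M)$ and $\chi(x)=x^2-tx+d$. By the Chinese Remainder Theorem it suffices to prove $M$ and $M'$ conjugate modulo every prime power $p^k$. The engine is a cyclic-vector lemma: if $M\bmod p$ is \emph{not} scalar, then not every vector is an eigenvector, so some $v_0$ over $\mathbb{F}_p$ satisfies $\det[v_0\mid Mv_0]\not\equiv0\pmod p$; an arbitrary lift of $v_0$ to $(\ZZ/p^k\ZZ)^2$ keeps this determinant a unit, so $P=[v_0\mid Mv_0]\in\Mat(2,\ZZ/p^k\ZZ)^{\times}$, and Cayley--Hamilton gives $P^{-1}MP=\left(\begin{smallmatrix}0&-d\\1&t\end{smallmatrix}\right)$. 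Thus any matrix non-scalar mod $p$ is conjugate modulo $p^k$ to the companion matrix of its characteristic polynomial, an object depending only on $(t,d)$. To treat a general prime $p$, set $e=v_p(g)$. If $e=0$ the lemma applies to $M$ directly. If $e\ge1$ then $p^e\mid g$ divides $\beta,\gamma$ and $\delta-\alpha$, so $M=\alpha\id+p^eM_2$ with $\alpha=M_{11}$ and $M_2=(M-\alpha\id)/p^e$ an integer matrix of $\mgcd$ coprime to $p$, hence non-scalar mod $p$; applying the lemma to $M_2$ conjugates $M$ modulo $p^k$ to the normal form $N_p(\alpha)=\left(\begin{smallmatrix}\alpha&-\chi(\alpha)/p^{e}\\p^{e}&t-\alpha\end{smallmatrix}\right)$. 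Running the same reduction on $M'$ yields $N_p(\alpha')$ with $\alpha'=M'_{11}$, and it remains to conjugate $N_p(\alpha)$ to $N_p(\alpha')$.

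Here lies the \textbf{main obstacle}, concentrated at $p=2$: a priori $\alpha$ and $\alpha'$ need only agree modulo $p^{e-1}$, and an odd-multiple-of-$2^{e-1}$ discrepancy would break the final comparison. I would eliminate it using the integrality constraint $p^{2e}\mid\chi(\alpha)$ (which follows from $\beta\gamma\equiv0\pmod{p^{2e}}$ together with $\delta=t-\alpha$): writing $s=\alpha'-\alpha$ and expanding $\chi(\alpha)-\chi(\alpha')=s\bigl(s+p^e(\cdots)\bigr)$ shows that $v_2(s)=e-1$ is incompatible with $p^{2e}\mid\chi(\alpha)-\chi(\alpha')$, forcing $\alpha\equiv\alpha'\pmod{p^e}$. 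Then $w=(\alpha'-\alpha)/p^e\in\ZZ$ and $N_p(\alpha')=\alpha\id+p^e\bigl(C(\alpha')+w\id\bigr)$, where $C(\alpha)$ is defined by $N_p(\alpha)=\alpha\id+p^eC(\alpha)$. A short computation shows $C(\alpha)$ and $C(\alpha')+w\id$ have equal trace and determinant and both have $\mgcd$ equal to $1$ (each has a lower-left entry $1$); applying the cyclic-vector lemma once more conjugates them modulo $p^k$, and since this conjugation fixes the central summand $\alpha\id$, it carries $N_p(\alpha')$ to $N_p(\alpha)$. Hence $M\sim N_p(\alpha)\sim N_p(\alpha')\sim M'$ modulo $p^k$, and the Chinese Remainder Theorem assembles these prime-power conjugacies into a conjugacy modulo every $n$.
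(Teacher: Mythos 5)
First, a point of comparison: the paper itself contains no proof of Theorem~\ref{T:mgcd}. It is quoted from \cite[Theorem~2]{BRW} and used as a black box in Section~\ref{S:completion}, so your argument cannot be measured against an in-paper proof, only against the general strategy of the cited source (which likewise works prime by prime). Having checked the details, I find your proposal correct, and it is a genuine self-contained proof of a statement the paper merely imports. In (a)$\Rightarrow$(b), the reformulation that $g\mid\mgcd(M)$ if and only if $M$ is scalar mod $g$ is exactly the conjugation-invariant description of the mgcd one needs, and it yields $\mgcd(M)=\mgcd(M')$ cleanly. In (b)$\Rightarrow$(a), the three pillars all hold up: the cyclic-vector/Cayley--Hamilton lemma (any matrix non-scalar mod $p$ is conjugate mod $p^k$ to the companion matrix of $\chi$, via $P=[v_0\mid Mv_0]$, whose determinant is a unit in $\ZZ/p^k\ZZ$); the decomposition $M=\alpha\id+p^eM_2$ with $\mgcd(M_2)$ prime to $p$, leading to the normal form $N_p(\alpha)$ (here you correctly use $\chi(\alpha)=-\beta\gamma$, whence $p^{2e}\mid\chi(\alpha)$); and the congruence $\alpha\equiv\alpha'\pmod{p^e}$. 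That last point is the real crux: the trace relation $2(\alpha-\alpha')\equiv 0\pmod{p^e}$ settles all odd $p$ but leaves a possible discrepancy $v_2(\alpha'-\alpha)=e-1$ at $p=2$, and your valuation argument closes it correctly, since $v_2\bigl(s(s-2^em)\bigr)=2e-2<2e$ would contradict $2^{2e}\mid\chi(\alpha)-\chi(\alpha')$. I also verified the ``short computation'' you defer: with $s=p^ew$ one gets $\det\bigl(C(\alpha')+w\id\bigr)-\det C(\alpha)=\bigl(s(t-\alpha-\alpha')+\chi(\alpha')-\chi(\alpha)\bigr)/p^{2e}=0$ because $\chi(\alpha)-\chi(\alpha')=s(t-\alpha-\alpha')$, and both matrices have lower-left entry $1$, hence are non-scalar mod $p$, so the lemma applies and the resulting conjugation indeed respects the central summand $\alpha\id$.

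One small gap to patch: your case analysis is organised around $e=v_p(g)$ and silently assumes $g=\mgcd(M)\neq 0$. If $g=0$, then $M$ is an integer scalar matrix, $M'$ is scalar as well (its mgcd is also $0$), and equality of traces forces $M=M'$; this is trivial, but it must be said, since otherwise $M_2=(M-\alpha\id)/p^e$ is not even defined. With that sentence added, your proof is complete.
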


We will make use of this theorem in Section~\ref{S:completion} in
order to determine the period sets of certain toral endomorphisms
with integer eigenvalues. Now we first treat the cases of complex
and irrational real eigenvalues, for which the period sets of toral
endomorphisms can be gained from the minimal period sets of their
homotopy classes.

\subsection{Complex eigenvalues and finite orders}

If $\xi\in\CC\setminus\RR$ is the eigenvalue of a matrix
$A\in\Mat(2,\ZZ)$, its complex conjugate $\bar{\xi}$ is the second
eigenvalue of $A$ and the determinant $d = \xi\bar{\xi} =
\abs{\xi}^2$ is consequently a positive integer.

\smallskip

If $d=1$, both roots lie on the unit circle, that is, they have
modulus $1$ and they are actually roots of unity. (This is a special
case of a more general fact: if all algebraic conjugates of an
algebraic integer $\alpha$ have modulus $1$, $\alpha$ is a root of
unity.) But if all eigenvalues of the matrix $A$ are distinct roots
of unity, $A$ has finite order, that is, there is some integer $n$
such that $A^n = \id$.

\smallskip

It is well-known that for each $n$ the finite orders of subgroups of
$\GL(n,\ZZ)$ can only assume certain values, determined by all factors
of the cyclotomic polynomials that are elements of $\ZZ[x]$ and have 
degree $n$, hence are the characteristic polynomial of a matrix
in $\GL(n,\ZZ)$.
Furthermore, for $2n$ and $2n+1$, the lists of possible
orders coincide, more specifically, $\GL(n,\ZZ)$ for $n=2$ and $3$
can have subgroups of the finite orders $1,2,3,4,6$, see \cite{KP},
for example.

\smallskip

For $2\times 2$ integer matrices, these cases correspond to the
characteristic polynomials $x^2+x+1$, $x^2+1$, $x^2-x+1$ whose roots
are $e^{2 i \pi/3},e^{-2 i \pi/3}$, $\pm i$ and $e^{i\pi/3},e^{-i\pi/3}$,
respectively. In \cite{TP} the periods of the corresponding
automorphisms are derived from the characteristic equations; using
the knowledge about the minimal period sets, one can treat these
three cases at once. 
\textit{They are represented in rows 6--8 of Table~2}.

\begin{lemma}
   Assume $A\in\Mat(2,\ZZ)$ is of finite order $k=\ord(A)$.
   Then $k\in\Per(f_A)$.
   Consequently, for $A\in\Mat(2,\ZZ)$ whose eigenvalues are complex roots
   of unity, one finds $\Per(f_A) = \MPer(f_A)\cup\{\ord(A)\}$.
\end{lemma}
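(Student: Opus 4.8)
The plan is to prove the two assertions in turn: first the membership $k\in\Per(f_A)$ for an arbitrary finite-order matrix, and then the exact formula for $\Per(f_A)$ in the complex-root-of-unity case as a consequence of Equation~\eqref{E:MPerPerOfEndo}.

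For the first assertion I would start from $A^k=\id$, which gives $f_A^k=\id_{\TT^2}$ and hence forces \emph{every} point of $\TT^2$ to be periodic with period dividing $k$. A point therefore has period exactly $k$ if and only if it lies outside
\[
   \fix(f_A^{j}) \;=\; \{\, x\in\TT^2 : (A^{j}-\id)\,x\equiv 0 \pmod{\ZZ^2}\,\}
\]
for every proper divisor $j$ of $k$. Since $k=\ord(A)$, each such $A^{j}-\id$ is a nonzero integer matrix, so $\fix(f_A^{j})$ is a \emph{proper} closed subgroup of $\TT^2$ --- finite when $\det(A^{j}-\id)\neq 0$ and a finite union of circles when the rank is $1$ --- and in either case a set of Lebesgue measure zero. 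The decisive step is then immediate: the points of period strictly less than $k$ form the finite union $\bigcup_{j\mid k,\,j<k}\fix(f_A^{j})$, which is still null and hence cannot be all of $\TT^2$. Any point in the nonempty complement has period exactly $k$, so $k\in\Per(f_A)$.

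To deduce the formula, I would first record that the two complex conjugate eigenvalues of such an $A$ are \emph{primitive} roots of unity of order $k=\ord(A)$; this is read off directly from the only three admissible characteristic polynomials $x^2+x+1$, $x^2+1$ and $x^2-x+1$, whose roots have orders $3,4,6$. Thus $1$ is an eigenvalue of $A^n$ exactly when $\lambda^n=1$, i.e.\ when $k\mid n$, so the exceptional set in Equation~\eqref{E:MPerPerOfEndo} is precisely $k\NN$ and that equation reads $\MPer(f_A)=\Per(f_A)\setminus k\NN$. On the other hand $f_A^k=\id$ forces every period to divide $k$, so the only multiple of $k$ that can occur as a period is $k$ itself; together with $k\in\Per(f_A)$ from the first part this gives $\Per(f_A)\cap k\NN=\{k\}$. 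Substituting back yields $\Per(f_A)=\bigl(\Per(f_A)\setminus k\NN\bigr)\cup\{k\}=\MPer(f_A)\cup\{k\}$, as claimed.

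The only real obstacle is the covering step --- ensuring the lower-period fixed sets do not exhaust $\TT^2$ --- and I expect the measure argument above to be the cleanest route; alternatively one may invoke that a connected torus is never a finite union of proper closed subgroups, since each such subgroup has empty interior. Everything else is bookkeeping with divisors and a direct application of Equation~\eqref{E:MPerPerOfEndo}.
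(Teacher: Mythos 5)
Your proof is correct, but it takes a genuinely different route from the paper's on both halves. For the membership $k\in\Per(f_A)$, the paper argues arithmetically on a single rational lattice: since $A^d-\id\neq 0$ for $d<k$, there is an $n$ with $A^d-\id\not\equiv 0 \bmod n$ for all $d<k$, and then a point $x\in(\ZZ/n\ZZ)^2$ with $(A^d-\id)x\neq 0$ for all $d<k$, which therefore has period exactly $k$. You instead work on the whole torus: the sets $\fix(f_A^{j})$ for proper divisors $j$ of $k$ are proper closed subgroups of $\TT^2$, hence null (or meagre), so they cannot exhaust $\TT^2$. Your version is actually tighter at the one delicate point: the paper's jump from ``$A^d-\id\not\equiv 0 \bmod n$ for each $d$'' to ``a single $x$ avoids all the kernels'' silently requires that $(\ZZ/n\ZZ)^2$ not be covered by the finitely many proper subgroups $\ker(A^d-\id)$, which can fail for small composite $n$ (e.g.\ $(\ZZ/2\ZZ)^2$ is the union of its three proper subgroups) and forces a careful choice of $n$, say a large prime; your measure/category argument sidesteps this issue entirely. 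For the identity $\Per(f_A)=\MPer(f_A)\cup\{\ord(A)\}$, the paper uses the explicitly known minimal period sets of rows 6--8 (every proper divisor of $k$ already lies in $\MPer(f_A)$, and $\Per(f_A)\subset\{d : d\mid k\}$), whereas you derive it from Equation~\eqref{E:MPerPerOfEndo}: the eigenvalues are primitive $k$-th roots of unity, so the excluded set there is exactly $k\NN$, and $\Per(f_A)\cap k\NN=\{k\}$ because all periods divide $k$. Your deduction is more self-contained, since it never consults the tabulated values of $\MPer(f_A)$; the paper's is shorter given those values, and its lattice argument has the side benefit of exhibiting a \emph{rational} point of period $k$, in keeping with the paper's local (mod $n$) methodology.
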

\begin{proof}
The fact that $A$ has order $k$ means $A^k - \id$ is the zero matrix
but for every $d<k$, $A^d-\id$ is not zero. But that means, there is
some $n\in\NN$ such that $A^d - \id \not\equiv 0 \mod n$ for every
$d<k$. Consequently, there is some $x = (\alpha,
\beta)\in(\ZZ/n\ZZ)^2$ such that $(A^d-\id)x \not= 0$ over
$\ZZ/n\ZZ$ for every $d<k$. Hence, the point $x$ has period $k$. The
last statement follows because the period set of a matrix of order
$k$ can contain divisors of $k$ only, and in the cases under
consideration, all divisors $d$ of $k$ with $d<k$ are already 
contained in the set 
$\MPer(f_A)\subset\Per(f_A)\subset\{d: d|k\}$.
\end{proof}

Note that the matrix order is absent from the set of minimal periods
of the homotopy class, as for $A^n=\id$, one has $N(f^{j\cdot
n}_A)=0$ for all $j\in\NN$. The cases of integer roots of unity will
be dealt with in Section \ref{S:completion}.

\smallskip

If the eigenvalues in $\CC\setminus\RR$ are not roots of unity, the
determinant has to be greater than $1$. The period sets of the
remaining cases with non-real eigenvalues are given by the following
theorem \cite[Thm.~4.24]{ABLSS}.

\begin{theorem}[\cite{ABLSS}]\label{CnoRU}
If $d\geq 2$ and $\lambda,\mu\in\CC\setminus\RR$ then
$\MPer(f_A)=\NN$, unless the pair $(t,d)$ is one of the following
cases:
\begin{itemize}
\item[(a)] If $(t,d)=(-2,2)$, then $\MPer(f_A) =\NN\setminus\{2,3\}$.

\item[(b)] If $(t,d)=(-1,2)$, then $\MPer(f_A) =\NN\setminus\{3\}$.

\item[(c)] If $(t,d)=(0,2)$, then $\MPer(f_A) =\NN\setminus\{4\}$.
\end{itemize}
\end{theorem}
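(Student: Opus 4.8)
The plan is to reduce the computation of $\MPer(f_A)$ to counting isolated periodic points, to convert that into an exact-period count by M\"obius inversion, and finally to a finite Diophantine verification. Since $d\ge 2$, both eigenvalues have modulus $\sqrt{d}>1$, so neither is a root of unity; hence $1$ is never an eigenvalue of $A^k$ and \eqref{E:MPerPerOfEndo} gives $\MPer(f_A)=\Per(f_A)$. Moreover every periodic point is then isolated, and the number of fixed points of $f_A^n$ equals the Nielsen number
\[
   F(n)\;:=\;N(f_A^n)\;=\;\abs{\det(\id-A^n)}\;=\;(1-\lambda^n)(1-\mu^n).
\]
Because $\mu=\bar\lambda$ this equals $\abs{1-\lambda^n}^2=1+d^{\,n}-2\,d^{\,n/2}\cos(n\theta)$, with $\theta$ defined by $t=2\sqrt{d}\,\cos\theta$; in particular $F(n)\ge(d^{\,n/2}-1)^2>0$ for all $n$. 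Writing $p_n=\lambda^n+\mu^n$ one has the convenient form $F(n)=1+d^{\,n}-p_n$, where $p_n$ obeys $p_n=t\,p_{n-1}-d\,p_{n-2}$, $p_0=2$, $p_1=t$.

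Next I would pass from fixed points to genuine periods. The number $\ell(n)$ of points of exact period $n$ is the M\"obius inversion of $F(n)=\sum_{k\mid n}\ell(k)$, and, since $\MPer(f_A)=\Per(f_A)$, one has $n\in\MPer(f_A)$ if and only if $\ell(n)>0$. In particular $\ell(1)=F(1)>0$, so $1$ always lies in the period set, while the relevant small cases are $\ell(p)=F(p)-F(1)$ for a prime $p$ and $\ell(4)=F(4)-F(2)$. The theorem is thus equivalent to the assertion that, under $t^2<4d$ and $d\ge 2$, the only pairs $(t,d)$ for which some $\ell(n)$ vanishes are those in (a)--(c), with the vanishing occurring exactly at the indicated $n$.

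The third step is a growth estimate that confines all exceptions to small $n$. From $F(n)\ge(d^{\,n/2}-1)^2$, the bound $F(k)\le(d^{\,k/2}+1)^2$, and the fact that every proper divisor of $n$ is at most $n/2$, one obtains
\[
   \ell(n)\;\ge\;F(n)-\!\!\sum_{k\mid n,\,k<n}\!\!F(k)\;>\;0 \qquad\text{for all } n\ge 7,
\]
uniformly in $(t,d)$, because $d^{\,n}$ outgrows the $O(d^{\,n/2})$ contribution of the proper divisors. A short separate estimate disposes of $n=5$ and $n=6$, so every possible exception must occur for $n\in\{2,3,4\}$.

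Finally I would resolve $n\in\{2,3,4\}$ directly, which is where the real work lies. Using $F(n)=1+d^{\,n}-p_n$, the vanishing conditions turn into Diophantine equations: $\ell(2)=0$ is equivalent to $t(t-1)=d(d+1)$, hence to $t=-d$ or $t=d+1$; $\ell(3)=0$ to $t^3-(3d+1)t=d^3-d$; and $\ell(4)=0$ to $u(u-1)=d^2(d^2+1)$ with $u=t^2-2d$. One then intersects each solution family with the complex-eigenvalue constraint $t^2<4d$: the branch $t=d+1$ is excluded because $(d+1)^2<4d$ never holds, and the $\ell(4)$ branch forces $(t,d)=(0,2)$. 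The delicate endpoint, and the step I expect to demand the most care, is the branch $t=-d$ of $\ell(2)=0$: it satisfies $t^2<4d$ exactly when $d^2<4d$, that is for $d\in\{2,3\}$, so beyond the tabulated pair $(-2,2)$ one must settle the status of $d=3$. Establishing the exhaustiveness of the exceptional list at this endpoint, rather than any single computation, is the crux of a complete proof.
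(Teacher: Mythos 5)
The paper itself contains no proof of this theorem: it is quoted as Theorem~4.24 of \cite{ABLSS}, and the paper's only added content is the remark that, since no eigenvalue is a root of unity, Equation~\eqref{E:MPerPerOfEndo} gives $\MPer(f_A)=\Per(f_A)$. Your strategy --- reduce to $\Per(f_A)$ by that same observation, count fixed points via $N(f_A^n)=\det(\id-A^n)=\abs{1-\lambda^n}^2>0$, M\"obius-invert to exact-period counts $\ell(n)$, kill all large $n$ by a growth estimate, and solve the resulting Diophantine equations for small $n$ --- is sound and is essentially the Nielsen-number method used in \cite{ABLSS}. Your reductions check out: $\ell(2)=d^2+d-t^2+t$, so $\ell(2)=0$ iff $t=-d$ or $t=d+1$; $\ell(3)=0$ iff $t^3-(3d+1)t=d^3-d$, whose only solutions with $t^2<4d$, $d\ge2$ are $(-1,2)$ and $(-2,2)$ (factor out $t=d+1$; the remaining quadratic has negative discriminant for $d\ge3$); and $\ell(4)=0$ iff $u(u-1)=d^2(d^2+1)$ with $u=t^2-2d$, forcing $(0,2)$.

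The flaw is in your last step, and you should have pushed it to its conclusion instead of labelling it an unsettled ``crux'': there is nothing left to settle. The branch $t=-d$ consists, by construction, of solutions of $\ell(2)=0$, so the pair $(t,d)=(-3,3)$ --- eigenvalues $(-3\pm i\sqrt{3})/2\notin\RR$, $d=3\ge2$ --- satisfies $\ell(2)=0$, hence $2\notin\Per(f_A)=\MPer(f_A)$. Concretely, for $A=\left(\begin{matrix}0&-3\\1&-3\end{matrix}\right)$ one has $\det(A-\id)=7$ and $\det(A+\id)=1+t+d=1$, so $\det(A^2-\id)=7$ as well: every point of period dividing $2$ is already fixed. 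Thus $(t,d)=(-3,3)$ is a fourth exceptional pair, with $\MPer(f_A)=\Per(f_A)=\NN\setminus\{2\}$, and the statement you were asked to prove (and with it row~15 of Table~2, which would assign $\NN$ to this pair) is incomplete as written. Note that $(-3,3)$ is precisely the complex-eigenvalue instance of the condition $t+d\in\{0,-2\}$ appearing in Theorem~\ref{T:realEV}(c) (ABLSS Proposition~4.23), which this paper restricts to real eigenvalues; the pair falls through the gap between that restriction and the list (a)--(c). So your method, carried to completion, is correct --- but its honest conclusion is a corrected theorem with an additional case (d): if $(t,d)=(-3,3)$, then $\MPer(f_A)=\NN\setminus\{2\}$; you should state that, rather than present the exhaustiveness of (a)--(c) as something still to be established.
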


In none of the cases addressed in Theorem~\ref{CnoRU} the
eigenvalues are roots of unity, so Equation~\eqref{E:MPerPerOfEndo} 
implies $\MPer(f_A) = \Per(f_A)$.
\textit{These cases are listed in rows 12-15 of Table~2.}

\subsection{Real eigenvalues}

Real eigenvalues either come in pairs of irrational quadratic
integers (that is, zeros of polynomials of the shape $x^2 - tx +d$,
$t,d\in\ZZ$) or elements of $\ZZ$. The fact that rational
eigenvalues have to be integers is shown in the following lemma.

\begin{lemma}\label{L:rationalImpliesInt}
If $A$ has one (and thus two) rational eigenvalues, they are in fact
integers, hence the polynomial splits into two linear factors over
$\ZZ$.
\end{lemma}

\begin{proof}
Let $\frac{p_1}{q_1}$ and $\frac{p_2}{q_2}$ be the zeros of the
characteristic polynomial $\chi^{}_A(x)$. Without loss of
generality, we can assume that $\gcd(p_1,q_1) = \gcd(p_2,q_2)=1$. We
know that $\frac{p_1}{q_1} + \frac{p_2}{q_2} = \frac{p_1 q_2 + p_2
q_1}{q_1 q_2}= \tr(A) \in\ZZ$.

\smallskip

Consequently, the numerator $r$ has to be divisible by $q_1 q_2$,
hence it is divisible by both $q_1$ and $q_2$, or, equivalently, $r=
p_1 q_2 + p_2 q_1 \equiv 0 \mod q_i$, $i\in\{1,2\}$. But $r \equiv 0
\equiv p_1 q_2 \mod q_1$, whence $p_1 q_2$ is divisible by $q_1$. As
$\gcd(p_1,q_1)=1$ by assumption, one has $q_1 | q_2$. Considering
the equation modulo $q_2$ analogously gives $q_2 | q_1$, hence
$q\coloneqq q_1 =\pm q_2$. Furthermore, $\frac{p_1}{q_1} \cdot
\frac{p_2}{q_2} = \det(A)\in\ZZ$, thus $\pm q^2 | p_1 p_2$ such that
the assumed coprimality implies $q\in\{-1,1\}$.
\end{proof}

The following theorem is the summary of Propositions~4.18, 4.19 and
4.23 from \cite{ABLSS}.

\renewcommand{\labelenumi}{\textnormal{(\alph{enumi})}}
\begin{theorem}[\cite{ABLSS}]\label{T:realEV}
Let $A$ be in $\Mat(2,\ZZ)$ with real eigenvalues.
\begin{enumerate}
\item Suppose $A$ has eigenvalues, neither of which is $\pm1$.
If $d+t\notin\{0,-2\}$, then $\MPer(f_A)=\NN$.\label{realNotOne}

\item If $d=t=0$ then $\MPer(f_A) = \{1\}$.\label{nullnull}

\item Suppose $t+d\in\{0,-2\}$ and  $(t,d)\not= (0,0)$.
Then $\MPer(f_A) =\NN\setminus\{2\}$.
\end{enumerate}
\end{theorem}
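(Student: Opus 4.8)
The plan is to reduce all three statements to a computation of $\Per(f_A)$ and then invoke the identity \eqref{E:MPerPerOfEndo}. The first step is to verify that in each case the eigenvalues avoid roots of unity, so that the exceptional set in \eqref{E:MPerPerOfEndo} is empty and $\MPer(f_A)=\Per(f_A)$. For real eigenvalues the only possible roots of unity are $\pm1$. In case (a) these are excluded by hypothesis. In case (c) one checks that $1+t+d\in\{1,-1\}\neq0$ and that $1-t+d=0$ would force $t\in\{\tfrac12,-\tfrac12\}\notin\ZZ$, so neither $\pm1$ is an eigenvalue. Case (b) is settled directly: $\chi^{}_A=x^2$ forces $A^2=0$, hence $f_A^2$ is the constant map to the origin and $\Per(f_A)=\{1\}$; since $\det(\id-A)=1$, the Lefschetz number shows $1\in\MPer(f_A)$, giving $\MPer(f_A)=\{1\}$.

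For the remaining cases I would compute $\Per(f_A)$ by counting periodic points on the torus. Because $A$ has no root-of-unity eigenvalue, $\det(\id-A^n)=(1-\lambda_1^n)(1-\lambda_2^n)\neq0$ for all $n$, so the fixed points of $f_A^n$ are exactly the $\abs{\det(\id-A^n)}=N(f_A^n)$ solutions of $(A^n-\id)x\in\ZZ^2$ on $\TT^2$. Writing $P_A(n)$ for the number of points of exact period $n$, Möbius inversion gives
\[
P_A(n)=\sum_{d\mid n}\mu(n/d)\,N(f_A^d)=\sum_{d\mid n}\mu(n/d)\,\abs{(1-\lambda_1^d)(1-\lambda_2^d)},
\]
with $\mu$ the classical Möbius function, and $n\in\Per(f_A)$ if and only if $P_A(n)>0$. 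The whole theorem thus becomes a positivity analysis of $P_A(n)$.

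The gap at $n=2$ in case (c) comes out cleanly from this formula. Using $N(f_A^2)=\abs{\det(\id-A)\det(\id+A)}=\abs{\chi^{}_A(1)\,\chi^{}_A(-1)}$ and $N(f_A)=\abs{\chi^{}_A(1)}$, together with $\chi^{}_A(-1)=1+t+d\in\{1,-1\}$ when $t+d\in\{0,-2\}$, one finds $N(f_A^2)=N(f_A)$, hence $P_A(2)=0$ and $2\notin\Per(f_A)$. For every other value of $n$, and throughout case (a), one checks that at least one eigenvalue has modulus strictly larger than $1$: if $\abs{d}\geq2$ this is automatic, if $\abs{d}=1$ it follows because a real eigenvalue of modulus $1$ would equal $\pm1$, and if $d=0$ the nonzero eigenvalue is an integer of absolute value at least $2$. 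The dominant term $\abs{\lambda_{\max}}^n$ then makes $N(f_A^n)$ grow exponentially, which forces $P_A(n)>0$ for all large $n$; the finitely many small $n$ are treated by direct evaluation of the displayed sum, confirming that in case (a) no gap occurs while in case (c) the only vanishing value is $n=2$.

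The main obstacle is precisely this uniform positivity estimate: one must control the alternating Möbius sum tightly enough to certify $P_A(n)>0$ for \emph{every} $n$ (respectively every $n\neq2$), rather than merely for large $n$, and to confirm that the cancellations never manufacture a spurious additional gap. This is delicate exactly in the low-determinant subcases $d\in\{-1,0,1\}$ and for the smallest periods, where the exponential growth is weakest; here one needs explicit lower bounds on $N(f_A^n)$ in terms of the dominant eigenvalue together with the boundary conditions $t+d\in\{0,-2\}$. These estimates are the analytic heart of the argument and correspond to Propositions~4.18, 4.19 and 4.23 of \cite{ABLSS}.
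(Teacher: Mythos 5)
You should first note that the paper contains no internal proof of this statement: Theorem~\ref{T:realEV} is imported wholesale, introduced as ``the summary of Propositions~4.18, 4.19 and 4.23 from \cite{ABLSS}''. So the only meaningful comparison is with the method of \cite{ABLSS}, which is indeed the Nielsen-number method you sketch. The parts of your proposal that are actually executed are correct: for real eigenvalues the only candidate roots of unity are $\pm1$; your exclusion of $\pm1$ in case (c) via $\chi^{}_A(-1)=1+t+d\in\{1,-1\}$ and via $\chi^{}_A(1)=0$ forcing $t=\pm\tfrac12\notin\ZZ$ is right; the reduction $\MPer(f_A)=\Per(f_A)$ through Equation~\eqref{E:MPerPerOfEndo} is exactly the mechanism the paper itself invokes in the surrounding text; case (b) via Cayley--Hamilton ($\chi^{}_A=x^2$, so $A^2=0$ and only $(0,0)$ is periodic) together with $\det(\id-A)=1$ is sound; and the identity $N(f_A^2)=\abs{\chi^{}_A(1)\,\chi^{}_A(-1)}=N(f_A)$, hence $P_A(2)=0$, is a clean and correct explanation of the missing period $2$ in case (c).

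The genuine gap is the one you flag yourself, and it is larger than your wording suggests. The phrase ``the finitely many small $n$ are treated by direct evaluation'' is not available as stated: cases (a) and (c) each cover an \emph{infinite} family of matrices (all admissible pairs $(t,d)$ with real eigenvalues), and the threshold beyond which the dominant term $\abs{\lambda_{\max}}^n$ overwhelms the alternating M\"obius sum depends on $(t,d)$, so no finite list of evaluations settles all cases. What is required --- and what constitutes essentially the entire content of Propositions~4.18, 4.19 and 4.23 of \cite{ABLSS} --- are lower bounds for $N(f_A^n)-\sum_{d\mid n,\ d<n} N(f_A^d)$ that are \emph{uniform} in trace and determinant, with separate treatment of the boundary families $t+d\in\{0,-2\}$, where $\abs{\lambda_{\max}}$ can be as small as the golden ratio and the exponential growth is weakest. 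Since you defer precisely this positivity analysis to the cited propositions, your proposal does not go beyond the paper's own treatment (a citation) but rather reproduces it, augmented with correct --- and genuinely useful --- bookkeeping of the root-of-unity exclusions and of the $n=2$ gap.
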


In the situation of $(a)$ and $(b)$,
no root of unity is eigenvalue of the matrix $A$ and hence 
$\MPer(f_A) = \Per(f_A)$. 
\textit{The rows in Table~2 corresponding to Theorem~\ref{T:realEV} are 9, 16
and 17}.

\smallskip

So according to Equation \eqref{E:MPerPerOfEndo} and in view of
Theorem~\ref{T:realEV}, the only cases of real eigenvalues in which
$\MPer(f_A) \not= \Per(f_A)$ correspond to $\pm 1$ being among the
eigenvalues. As Lemma~\ref{L:rationalImpliesInt} tells us, the other
eigenvalue then has to be an integer as well. These cases will be
dealt with in the following section.

\section{Filling the gaps: the remaining cases}\label{S:completion}

The cases where the general theory of minimal period sets does not
immediately give the period set of the corresponding toral
endomorphism is the case when the eigenvalues are $1$ or $-1$ 
and some second integer, which is in fact decisive for the determination
of the period set.
The simple example of $\id$ and $\left(\begin{matrix}1&1\\0&1
\end{matrix}\right)$ shows that the same eigenvalues can a
priori imply different period sets, the former being $\{1\}$, the
latter $\NN$, although both matrices share the (double) eigenvalue
$1$.

\smallskip

Let $E = \{-2,-1,0,1\}$ and $\ord_n a$ the order of $a$ modulo $n$, that
is, the least integer $k>0$ such that $a^k\equiv 1 \mod n$.
The order $\ord_n (a)$ is well-defined for all $n\in\NN$, $n>1$, such that
$\gcd(a,n)=1$.

\begin{lemma}\label{L:ordersets}
Let $a\in\ZZ\setminus\{0\}$. Then
\[
P_a \coloneqq \{\ord_n a \mid n\in\NN\setminus\{1\}, \gcd(a,n)=1\}
\]
equals one of the following sets:
\begin{enumerate}
\item $\{1\}$ if $a=1$,

\item $\{1,2\}$ if $a=-1$,

\item $\NN\setminus\{2\}$ if $a=-2$,

\item $\NN\setminus\{1\}$ if $a=2$,

\item $\NN$ if $a\in\ZZ\setminus(\exSet\cup\{2\})$.
\end{enumerate}
\end{lemma}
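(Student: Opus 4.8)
The plan is to treat the five values or ranges of $a$ in turn, proving for each the two inclusions defining $P_a$. Showing that a given integer is \emph{not} in $P_a$ is always elementary; the real content is to exhibit, for every admissible $k$, a modulus $n>1$ with $\gcd(a,n)=1$ and $\ord_n a = k$. Cases (1) and (2) are immediate, since $a\equiv 1 \pmod n$ for all $n$ when $a=1$, while for $a=-1$ one has order $1$ modulo $2$ and order $2$ modulo every $n>2$.

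For $a=\pm 2$ I would use the odd moduli $2^k-1$ and $2^k+1$ (automatically coprime to $2$) together with the elementary identity $\ord_{2^k-1}2=k$, which follows because $2^j\equiv 1 \pmod{2^k-1}$ forces $2^k-1\mid 2^j-1$, impossible for $0<j<k$. For $a=2$ the modulus $n=2^k-1$ realises every $k\ge 2$, and $1\notin P_2$ because $\ord_n 2=1$ would give $n\mid 1$; hence $P_2=\NN\setminus\{1\}$. For $a=-2$ I would split on the parity of $k$: for $k$ odd, $(-2)^k=-2^k\equiv 1 \pmod{2^k+1}$, and a smaller order would force the size-impossible $2^j\equiv -1 \pmod{2^k+1}$; for even $k\ge 4$, $(-2)^k=2^k\equiv 1 \pmod{2^k-1}$, and a proper divisor $\delta$ as order leads either to the contradiction $k\mid\delta$ or to an impossible $2^\delta\equiv -1$. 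The value $2$ is genuinely missing, since $(-2)^2\equiv 1 \pmod n$ forces $n=3$, where $-2\equiv 1$ already has order $1$; this gives $P_{-2}=\NN\setminus\{2\}$.

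Case (5), with $\abs{a}\ge 3$, is the substantial one, and here I would invoke the Bang--Zsygmondy theorem on primitive prime divisors: for $b\ge 2$ and $e\ge 1$, the number $b^e-1$ has a prime factor $p$ with $\ord_p b=e$, except when $(b,e)=(2,6)$, when $e=1$ and $b=2$, or when $e=2$ and $b+1$ is a power of $2$. Any such $p$ is coprime to $a$, so it is an admissible modulus. The two small periods I would realise by hand: $k=1$ by $n=\abs{a-1}\ge 2$, and $k=2$ by $n=a^2-1$, where $a^2\equiv 1 \pmod n$ while $0<\abs{a-1}<n$ forces $a\not\equiv 1$, so that $\ord_n a=2$. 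For $a\ge 3$ and $k\ge 3$ the base $a\ge 3$ and exponent $k\ge 3$ avoid all Zsygmondy exceptions, so a primitive prime divisor $p$ of $a^k-1$ has $\ord_p a=k$ directly.

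The one delicate point is $a<0$, say $a=-b$ with $b\ge 3$, where from $e=\ord_p b$ one must read off $\ord_p(-b)$. Writing $(-b)^j=(-1)^j b^j$ and using $b^{e/2}\equiv -1$ for even $e$, one obtains $\ord_p(-b)=2e$ if $e$ is odd, $\ord_p(-b)=e$ if $4\mid e$, and $\ord_p(-b)=e/2$ if $e\equiv 2\pmod 4$. Inverting this, I would realise a target $k\ge 3$ by choosing the exponent for the primitive prime divisor accordingly: $b^{2k}-1$ for $k$ odd (so $e=2k\equiv 2\pmod 4$ and the order is $e/2=k$), $b^{k}-1$ for $k\equiv 0\pmod 4$ (order $e=k$), and $b^{k/2}-1$ for $k\equiv 2\pmod 4$, writing $k=2m$ with $m$ odd (so $e=m$ is odd and the order is $2m=k$). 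In every branch the exponent is at least $3$ and the base is at least $3$, so Zsygmondy applies without exception. The main obstacle is thus not any single estimate but the careful organisation of these parity cases together with the finitely many small exponents; once this is done, every $k\in\NN$ belongs to $P_a$, so $P_a=\NN$.
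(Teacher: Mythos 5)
Your proposal is correct, but for the main case it takes a genuinely different route from the paper. For $a=\pm1$ and $a=\pm2$ your argument coincides with the paper's: explicit moduli $2^k-1$ and $2^k+1$, with the order pinned down by the size obstruction $2^k\mp1 \mid 2^d\pm 1$ being impossible for $d<k$, and the same ad hoc verifications that $1\notin P_2$ and $2\notin P_{-2}$. The divergence is in case (5), $\abs{a}\ge 3$: the paper simply continues the elementary scheme, taking the modulus $a^k-1$ when $a>2$ (or $a<0$ and $k$ even) and $\abs{a}^k+1$ when $a<-2$ and $k$ odd, and ruling out smaller orders by divisibility plus the estimate $\abs{a}^k-\abs{a}^d\le 2$; no external theorem is needed. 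You instead invoke the Bang--Zsygmondy theorem on primitive prime divisors, which then forces the extra bookkeeping of translating $\ord_p b$ into $\ord_p(-b)$ (your three-branch parity table, which is stated and inverted correctly, and your choices of exponent do avoid all Zsygmondy exceptions since base and exponent are at least $3$). What your route buys is that every admissible $k$ is realised modulo a \emph{prime}, a slightly stronger conclusion, and it makes the existence of a suitable modulus conceptually automatic; what it costs is reliance on a theorem far deeper than the lemma itself, together with negative-base case analysis that is, if anything, more intricate than the paper's elementary one. Both proofs are complete and correct; the paper's is self-contained, yours outsources the core difficulty to Zsygmondy.
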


\begin{proof}
The statements for $a=\pm 1$ are clear, so we exclude these cases
from our further considerations. We first check that $2\notin
P_{-2}$: if $(-2)^2 \equiv 1 \mod n$, clearly, $n| [(-2)^2 -1] = 3$,
so the only possible choice for $n$ is $3$ (as $1$ is excluded). But
$-2 \equiv 1 \mod 3$, and thus $\ord_{3} (-2) = 1$, whence $1\in
P_{-2}$ and $2\notin P_{-2}$ follows.

\smallskip

Next, we note that $1\notin P_{2}$, for $2\equiv 1 \mod n$ would
imply $n|(2-1)=1$; however, $1\in P_a$ for all other $a$ because $a
\equiv 1 \mod (a-1)$ if $a>2$ and $a \equiv 1 \mod (-a+1)$ if $a <
-1$.

\smallskip

We first consider the case where $a$ is positive and assume $a>2$
or $k>1$, such that $a^k-1 > 1$. Clearly, $a^k \equiv 1 \mod
(a^k-1)$, so assume there is a $d|k$, $d\not= k$, with $a^d \equiv 1
\mod (a^k-1)$. But then $(a^k - 1) | (a^d -1)$, which is not
possible for $d<k$, hence $\ord_{a^k-1} (a) =k$ and $k\in P_a$.

\smallskip

Next, consider the case where $a$ is negative and $k$ even, $a\not=
-2$ or $k\not= 2$. One has $a^k \equiv 1 \mod (a^k-1)$, and $(a^k -
1)| (a^d - 1)$ for the divisor $d = \ord_{a^k-1}(a)$ of $k$. If $d$
is even, this gives the same contradiction as above; if $d$ is odd,
one has $(a^k-1) | (-a^d+1)$ and thus $\abs{a}^k - \abs{a}^d \leq
2$. But the only positive integers $d,k$ with $d<k$ satisfying this
inequality are $k=2$, $d=1$ in the case $\abs{a} = 2$. For $a < -2$
and $k$ odd, one readily checks that $a^k \equiv 1 \mod (-a^k+1)$
and similar reasoning as above shows that $k$ is indeed the order
modulo $(-a^k+1)$. Hence it follows that if $\abs{a} > 2$ or $k>2$
then $k\in P_a$, and in summary, the lemma follows.
\end{proof}

We stipulate that $P^{}_0 = \emptyset$. Then we can formulate the
following

\begin{prop}\label{P:intEV}
Let $A$ be in $\Mat(2,\ZZ)$ and assume $A$ has the eigenvalues $a, b
\in \ZZ$. Then $\Per(f_A) \supset \{1\}\cup P_{a}\cup P_{b}$.
\end{prop}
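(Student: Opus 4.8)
```latex
The plan is to show that each of $P_a$ and $P_b$ is contained in $\Per(f_A)$, the claim $1\in\Per(f_A)$ being trivial since the origin is always a fixed point. By symmetry it suffices to treat $a$, and since $P_0 = \emptyset$ by our stipulation, we may assume $a\neq 0$. The key idea is to exploit the rational lattices $L_n$: for each $n$ with $\gcd(a,n)=1$ we want to produce a point in some $L_n$ whose $f_A$-orbit has length exactly $\ord_n(a)$. Since $\ord_n(a)$ ranges over all of $P_a$ as $n$ varies, this yields $P_a\subset\Per(f_A)$.

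First I would fix a value $k = \ord_n(a)$ for some admissible $n$ and seek an eigenvector of $A$ modulo $n$ associated with the eigenvalue $a$. The natural candidate is a point $x\in L_n$ (equivalently, $x\in(\ZZ/n\ZZ)^2$ after scaling by $n$) satisfying $Ax \equiv a\,x \bmod n$. For such an $x$ one has $A^j x \equiv a^j x \bmod n$, so the $f_A$-orbit of $x$ returns to $x$ precisely when $a^j\equiv 1 \bmod n'$, where $n'$ is the order of the cyclic subgroup generated by $x$ — and if $x$ has exact additive order $n$ in $(\ZZ/n\ZZ)^2$, this period is exactly $\ord_n(a) = k$. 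The first technical step is therefore to guarantee the existence of such an eigenvector of exact order $n$. Over $\ZZ/n\ZZ$, since $a$ is an eigenvalue of the integer matrix $A$, the matrix $A - a\id$ is singular modulo every prime dividing $n$; using the Chinese Remainder Theorem and reducing to prime-power moduli $p^e\|n$, I would locate a nonzero vector in the kernel of $(A-a\id)$ mod $p^e$ that is not divisible by $p$, and patch these together via CRT to obtain a vector of full order $n$.

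The main obstacle I anticipate is precisely this kernel-existence step at prime powers: while $a$ being a root of $\chi_A^{}$ makes $\det(A-a\id)\equiv 0$, it does not immediately produce a kernel vector that is a \emph{unit} multiple, i.e. not annihilated by $p$, which is what is needed for the orbit to genuinely have length $k$ rather than some proper divisor. To handle this cleanly I would invoke the structure theory from Theorem~\ref{T:mgcd}: trace, determinant and mgcd are a complete conjugacy invariant mod $n$, so it suffices to realise the eigenvector for one convenient representative in each local-conjugacy class. The cleanest representatives are the companion matrix $\left(\begin{smallmatrix}0 & -ab\\ 1 & a+b\end{smallmatrix}\right)$ when $a\neq b$, or a triangular normal form $\left(\begin{smallmatrix}a & \mgcd\\ 0 & a\end{smallmatrix}\right)$ in the repeated-eigenvalue case, for which an eigenvector $(1,0)^{\mathsf T}$ mod $n$ is transparent and manifestly of full order.

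Finally, having fixed the eigenvector $x$ of order $n$, I would verify that its period under $f_A$ is exactly $\ord_n(a)$: the orbit closes at step $j$ iff $(a^j - 1)x\equiv 0\bmod n$, and because $x$ has exact order $n$ this forces $a^j\equiv 1\bmod n$, so the minimal such $j$ is $\ord_n(a)=k$. Letting $n$ range over all moduli coprime to $a$ and appealing to Lemma~\ref{L:ordersets} to identify the resulting set of orders as $P_a$, we conclude $P_a\subset\Per(f_A)$; the same argument with $b$ gives $P_b\subset\Per(f_A)$, and together with the fixed point at the origin this yields $\{1\}\cup P_a\cup P_b\subset\Per(f_A)$ as claimed.
```
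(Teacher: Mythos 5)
Your overall strategy---produce, for each $n$ coprime to $a$, a mod-$n$ eigenvector of full additive order, whose $f_A$-orbit then has length exactly $\ord_n(a)$---is sound, and it is the same mechanism that drives the paper's own proof (there the eigenvector is $(1/n,0)$ for a triangular normal form). Your closing computation is also correct: exact additive order $n$ together with $(a^j-1)x\equiv 0 \bmod n$ does force $a^j\equiv 1\bmod n$. The genuine gap is in the step where you pass to ``one convenient representative in each local-conjugacy class.'' Theorem~\ref{T:mgcd} requires \emph{all three} invariants---trace, determinant \emph{and} mgcd---to coincide, and for fixed trace $a+b$ and determinant $ab$ with $a\neq b$ there is not a single local conjugacy class but one for each possible value of the mgcd, which ranges over the positive divisors of $b-a$. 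Your chosen representative, the companion matrix $\left(\begin{smallmatrix}0 & -ab\\ 1 & a+b\end{smallmatrix}\right)$, has $\mgcd = \gcd(-ab,1,a+b)=1$, so it represents only the class with mgcd $1$. Concretely, $A=\diag(1,3)$ has $\mgcd(A)=2$; modulo $2$ one has $A\equiv\id$, which is central and hence conjugate only to itself, while the companion matrix reduces to $\left(\begin{smallmatrix}0&1\\1&0\end{smallmatrix}\right)\not\equiv\id$. So no conclusion about the orbits of $A$ can be transferred from the companion matrix, and your argument as written covers only matrices with mgcd $1$ in the distinct-eigenvalue case (your repeated-eigenvalue representative $\left(\begin{smallmatrix}a & \mgcd\\ 0 & a\end{smallmatrix}\right)$ is fine).

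The repair is small, and it is exactly what the paper does: take the triangular representative $\left(\begin{smallmatrix}a & r\\ 0 & b\end{smallmatrix}\right)$ with $r=\mgcd(A)$. Since $\mgcd(A)^2$ divides $\Delta=(b-a)^2$, one has $r\mid b-a$, so this matrix has mgcd $\gcd(r,0,b-a)=r$ and matches all three invariants of $A$; the point $(1/n,0)$ is then the required eigenvector, and swapping $a$ and $b$ (which preserves the invariants) yields $P_b$ as well. Alternatively, you can bypass Theorem~\ref{T:mgcd}, and with it the CRT/prime-power patching you flagged as the main obstacle, entirely: since $a\in\ZZ$ is an eigenvalue of the integer matrix $A$, the kernel of $A-a\id$ contains a \emph{primitive} integer vector $v$ (clear denominators, divide by the gcd of the coordinates), and then $x=v/n\in L_n$ has exact additive order $n$ and satisfies $Ax=ax$ exactly, so its least period under $f_A$ is $\ord_n(a)$ for every $n$ with $\gcd(a,n)=1$. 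This single global eigenvector settles all moduli $n$ at once and makes the ``kernel-existence step at prime powers'' a non-issue.
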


\begin{proof}
According to Theorem~\ref{T:mgcd}, two matrices in $\Mat(2,\ZZ)$
induce the same orbit structure on the rational lattices $L_n$ for
all $n$, if trace, determinant and mgcd coincide. The square of the
mgcd always divides the discriminant, see Equation
\eqref{E:mgcdDivDiscr}, hence the set of possible triplets
$(\tr,\det,\mgcd)$ can be itemised explicitly. Fixing the trace and
the determinant as $\tr(A) = a+b, \det(A) = a\cdot b$, the
discriminant is $\Delta = \tr(A)^2 - 4\det(A) = (b-a)^2$, whence the
mgcd has to divide $b-a$. Consequently, one has an upper diagonal
matrix in the following set corresponding to each possible value of
the mgcd
\begin{equation}\label{E:normalForms}
\left\{\left(\begin{matrix}a & r\\0 & b\end{matrix}\right) \mid r^2
| \Delta \right\}.
\end{equation}
In other words, given an integer matrix $A$, there is some upper
diagonal matrix in the above set that has exactly the same orbit
structure on each of the finite sets $L_n$, $n\in\NN$. Furthermore,
the matrices $\left(\begin{matrix}a & r\\ 0 & b\end{matrix}\right)$
and $\left(\begin{matrix}b & r\\ 0 & a\end{matrix}\right)$ have the
same orbit structure on each $L_n$. But an upper diagonal matrix
$\left(\begin{matrix}d_1 & r\\0 & d_2\end{matrix}\right)$ clearly
has points of all orders contained in $P_{d_1}$ and $P_{d_2}$, as,
for instance, for $k\in P_{d_1}$, the point $(\frac{1}{n},0)$ has
period $k=\ord_n(d_1)$ and by the previous sentence, the roles of
$d_1$ and $d_2$ are interchangeable. Thus, for eigenvalues $a,b$,
the set $\Per(f_A)$ comprises the union $P_a \cup P_b$. Since the
point $(0,0)$ has period $1$ for every integer matrix, the claim
follows.
\end{proof}

\begin{remark}
Note that the eigenvalue $0$ does not contribute any periods. If
both eigenvalues are $0$, the matrix is nilpotent, and the only
periodic point is $(0,0)$. If the second eigenvalue is non-zero, it
has to be an integer, and it determines the period set according to
Lemma~\ref{L:ordersets} and Proposition~\ref{P:intEV}.
If the second eigenvalue is $1$ or $-1$, one has
$A^2=A$ or $A^3=A$, respectively, which immediately gives the period sets.
\textit{The cases with $0$ among the eigenvalues are treated in rows
9--11 of Table~2 if not covered by row 16 or 17.}

\smallskip

The symbol `$\supset$' in Proposition~\ref{P:intEV} cannot be replaced by an
equality sign, because a non-zero upper-diagonal entry $r$ may give
rise to additional periods, as in the aforementioned example of
$\id$ versus $\left(\begin{matrix}1 & 1\\ 0 & 1\end{matrix}\right)$,
as well as for the matrices
$-\id$ and $\left(\begin{matrix}-1 & \phantom{-}1\\ \phantom{-}0 & -1\end{matrix}\right)$.
These cases are covered by Proposition~3.6 in \cite{TP}, where the powers of 
suitably parametrised $2\times2$ matrices are considered.
Using our local approach modulo $n$, one could restrict the considerations to upper
diagonal matrices as in Equation~\eqref{E:normalForms}, which, together with the Nielsen numbers
whenever they do not vanish, yield the period sets as listed in Table~2.
We note that the cases of a double eigenvalue $1$ or $-1$, respectively, are the
only ones in which the knowledge of the eigenvalues is not enough to determine 
the period set of the endomorphism.
\textit{The cases corresponding to the eigenvalues $\pm 1$ are listed in rows 1--5 in Table~2.}
\end{remark}

With help of Proposition~\ref{P:intEV}, we can write down the period
set for all remaining cases where $1$ or $-1$ is among the
eigenvalues of the matrix.

\begin{coro}\label{C:evpm1}
   Let $(t,d)$ denote the pair of trace and determinant of
   $A\in\Mat(2,\ZZ)$.
   \begin{itemize}
   \item[(a)] If the eigenvalues are $-1, -d$, hence $t+d=-1$, $d\notin\{-1,0,1\}$
   the period set $\Per(f_A)$ is $\NN$.

   \item[(b)] If the eigenvalues are $-2, 1$, hence $t-d=1$, then
   $\Per(f_A) = \NN\setminus\{2\}$.

   \item[(c)] If the eigenvalues are $1,d$, hence $t-d = 1$, 
   $(t,d)\not=(-1,-2)$,
   then $\Per(f_A) = \NN$.
   \end{itemize}
\end{coro}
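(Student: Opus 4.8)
The plan is to obtain all three period sets by combining the lower bound $\Per(f_A)\supset\{1\}\cup P_a\cup P_b$ supplied by Proposition~\ref{P:intEV} with the explicit description of the sets $P_a$ in Lemma~\ref{L:ordersets}, where $a,b$ are the integer eigenvalues. In cases (a) and (c) this lower bound already exhausts $\NN$, so equality with $\NN$ is immediate once one checks the relevant union. The only genuine work lies in (b): there the lower bound is merely $\NN\setminus\{2\}$, and I must separately establish the matching upper bound by showing $2\notin\Per(f_A)$.

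For (a) the eigenvalues are $-1$ and $-d$ with $d\notin\{-1,0,1\}$, hence $-d\in\ZZ\setminus\{-1,0,1\}$. Lemma~\ref{L:ordersets} gives $P_{-1}=\{1,2\}$, while $P_{-d}$ equals $\NN\setminus\{1\}$, $\NN\setminus\{2\}$, or $\NN$ according to whether $-d=2$, $-d=-2$, or $\abs{-d}\geq 3$; in each case the element missing from $P_{-d}$ lies in $\{1,2\}\subset P_{-1}$, so $P_{-1}\cup P_{-d}=\NN$. For (c), after setting aside the values $d\in\{-1,0,1\}$ (which produce the eigenvalue pairs treated in rows~5,~10 and~1--2 of Table~2) and $d=-2$ (the excluded pair $(t,d)=(-1,-2)$, which is case (b)), the remaining admissible second eigenvalue satisfies $d\in\ZZ\setminus E$; then $P_1=\{1\}$ and $P_d$ equals $\NN\setminus\{1\}$ (if $d=2$) or $\NN$ (if $\abs{d}\geq 3$), so $\{1\}\cup P_1\cup P_d=\NN$. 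Since trivially $\Per(f_A)\subset\NN$, both (a) and (c) yield $\Per(f_A)=\NN$.

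The heart of the argument is (b), where Proposition~\ref{P:intEV} and Lemma~\ref{L:ordersets} give $\Per(f_A)\supset\{1\}\cup P_{-2}\cup P_1=\NN\setminus\{2\}$, and I must exclude the period $2$ --- precisely the kind of ``extra'' period that, by the preceding remark, an off-diagonal entry could in principle create. The idea is to exploit the minimal polynomial: the eigenvalues $-2,1$ force $\chi^{}_A(x)=x^2+x-2$, so by the Cayley--Hamilton theorem $A^2+A-2\,\id=0$, i.e. $A^2-\id=-(A-\id)$. As $\ZZ^2$ is stable under negation, for every $x\in\RR^2$ one has $(A^2-\id)x\in\ZZ^2$ if and only if $(A-\id)x\in\ZZ^2$; equivalently, $\fix(f_A^2)=\fix(f_A)$ on $\TT^2$. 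Hence $f_A$ possesses no point of exact period $2$, giving $2\notin\Per(f_A)$ and therefore $\Per(f_A)=\NN\setminus\{2\}$. I expect this matrix identity $A^2-\id=-(A-\id)$ to be the crucial step, since it bypasses the Nielsen count $N(f_A^2)=\abs{\det(\id-A^2)}=0$, which vanishes because of the eigenvalue $1$ and thus carries no information about the period-$2$ points.
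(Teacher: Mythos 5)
Your proof is correct and takes essentially the same route as the paper: cases (a) and (c) are deduced from Proposition~\ref{P:intEV} (together with Lemma~\ref{L:ordersets}), and case (b) is settled by exactly the same Cayley--Hamilton identity $A^2-\id=-(A-\id)$, showing every period-$2$ point is already fixed. Your version merely spells out the union computations and the implicit exclusion of $d\in\{-1,0,1\}$ in (c), which the paper dismisses as immediate.
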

\begin{proof}
   Case (a) and (c) immediately follow from Proposition~\ref{P:intEV};
   for (b), we note that for each $x$ with $A^2 x = x \mod 1$, one finds
   $A^2 x - \one = -A x + x = -(A-\id) x = 0 \mod 1$ by the Cayley-Hamilton 
   Theorem,  so $x$ is a fixed point
   and therefore, $A$ does not admit any points of least period $2$.
\end{proof}

We state again that the cases treated in Corollary~\ref{C:evpm1} are
the ones in which $\Per(f_A)$ and $\MPer(f_A)$ significantly differ;
the latter can be found on page~30 in \cite{ABLSS}.
\\
\textit{Corollary~\ref{C:evpm1} yields the entries of rows 18--20 in
Table~2.}

\section*{Acknowledgements}

The first author is partially supported by a MCYT/FEDER grant
MTM2008--03437, an AGAUR grant number 2014SGR--568, an ICREA
Academia, two grants FP7-PEOPLE-2012-IRSES 316338 and 318999, and
FEDER-UNAB10-4E-378. The second author 
gratefully acknowledges the funding of the four months stay at the 
Universitat Aut\`{o}noma de Barcelona
within the project Rozvoj v\v{e}deck\'ych kapacit Slezsk\'e univerzity
v Opav\v{e} (CZ.1.07/2.3.00/30.0007).

\end{document}